\documentclass[12pt,leqno,twoside]{amsart}
\usepackage[utf8]{inputenc}
\usepackage[colorlinks=true, pdfstartview=FitV, linkcolor=blue, citecolor=blue]{hyperref}
\usepackage{amstext,amsmath,amscd, bezier,indentfirst,amsthm,amsgen,enumerate, geometry}
\usepackage[all,knot,arc,import,poly]{xy}
\usepackage{amsfonts,color}
\usepackage{amssymb}
\usepackage{latexsym}
\usepackage{epsfig}
\usepackage{graphicx}
\usepackage{srcltx} 
\usepackage{enumitem}

\makeatletter
\pdfstringdefDisableCommands{\let\HyPsd@CatcodeWarning\@gobble}
\makeatother

\topmargin 0cm  
\headsep .5cm             
\headheight 1cm    
\evensidemargin 0.25cm         
\oddsidemargin 0.25cm   
\textwidth 16cm                               
\textheight 21cm
\setlength{\topmargin}{1mm} 
\setlength{\unitlength}{1mm}

\newcommand{\fin}{\hspace*{\fill}$\square$\vspace*{2mm}}

\theoremstyle{plain}
\newtheorem{theorem}{Theorem}[section]
\newtheorem{lemma}[theorem]{Lemma}
\newtheorem{proposition}[theorem]{Proposition}

\theoremstyle{definition}
\newtheorem{definition}[theorem]{Definition}
\theoremstyle{remark}

\def\bC{{\mathbb C}}

\def\bN{{\mathbb N}}
\def\bP{{\mathbb P}}

\def\bR{{\mathbb R}}

\def\max{{\rm max}}

\def\Sing{{\rm Sing}}

\def\const.{{\rm const.}}


\begin{document}
\title[Tangent cones at infinity]{Tangent cones at infinity}

\author{Luis Renato G. Dias}
\address{Instituto de Matemática e Estatística, Universidade Federal de Uberl\^andia, Av. Jo\~ao Naves de \'Avila 2121, 1F-153 - CEP: 38408-100, Uberl\^andia, Brazil.}
\email{lrgdias@ufu.br}

\author{Nilva Rodrigues Ribeiro}
\address{Universidade Federal de Vi\c cosa, Campus Rio Parana\'iba, Km 7, Zona Rural, MG - 230 Rodovi\'ario - CEP: 38810-000, Rio Parana\'iba, Brazil.}
\email{nilva.ribeiro@ufv.br}

\subjclass[2020]{14B05, 58K30, 14R10, 32S20}

\keywords{Whitney tangent cones at infinity, branched covering, algebraic set}

\begin{abstract} Let $X\subset\bC^m$ be an unbounded pure $k$-dimensional algebraic set. We define the tangent cones $C_{4, \infty}(X)$  and $C_{5,\infty}(X)$ of $X$ at infinity. We establish some of their properties and relations. We prove that $X$ must be an affine linear subspace of $\bC^m$ provided that $C_{5, \infty}(X)$ has  pure dimension $k$. Also, we study  the relation between the tangent cones at infinity and representations of $X$ outside a compact set as a branched covering. Our results can be seen as versions at infinity of results of Whitney and Stutz.  
\end{abstract}

\maketitle

\section{Introduction}

Let $(V,0) \subset \bC^m$  be an $n$-dimensional analytic set  in some neighborhood of the origin in $\bC^m$.  In \cite{Wh-65, Wh}, Whitney considered six tangent cones $C_i(V,0)$, $i=1, \ldots, 6$, and, since then, their properties and relations have been studied. The precise definitions for the six cones can be seen in \cite[p. 92]{Ch}, \cite{Wh-65} and \cite{Wh}. The cone $C_3(V, 0)$ is also  known as the Zariski tangent cone of $V$. 

The Whitney tangent cones $C_3(V,0), C_4(V, 0)$ and $C_5(V, 0)$ play an important role in various results. For instance, it is well known that $C_3(V, 0)$ determines the set of all projections $\pi$ of $(V, 0)$ to $\bC^n$ that are a branched cover when restricted to a small neighborhood of $0\in V$. In \cite{St}, Stutz showed that if $\dim C_4(V, 0) =n$, then $\pi$ can be chosen such that the critical locus of $\pi$ is exactly the singular set of $V$. Also in \cite{St}, Stutz proved that if $\dim C_5(V, 0)=n+1$, then a suitable  choice of a projection  of $(V, 0)$ to $\bC^{n+1}$ gives a representation of $V$ in terms of an hypersurface in $\bC^{n+1}$; see also  \cite{Ch} and \cite{Jasna}. 

In this paper, we define the Whitney tangent cones at infinity $C_{4,\infty}(X)$ and $C_{5, \infty}(X)$ for an unbounded pure $k$-dimensional algebraic set $X\subset\bC^m$ and we also consider the Whitney tangent cone at infinity $C_{3, \infty}(X)$  defined in \cite{LP} as follows: 

\begin{definition}\label{d:cones} Let $X\subset\bC^m$ be an unbounded pure $k$-dimensional algebraic set. 
\begin{itemize}
    \item[$\bullet$] The Whitney tangent cone $C_{3, \infty}(X)$ of $X$ at infinity is defined  as the set of the points $v\in \bC^m$ for which there exist sequences $(p_j)_{j}$ in $X$ and $(t_j)_{j}$ in $\bC$ such that $\lim_{j\to \infty} \|p_j\| = \infty$ and $\lim_{j\to \infty}  t_jp_j = v$. 

    \item[$\bullet$] The Whitney tangent cone $C_{4, \infty} (X)$ of $X$ at infinity is defined as the set of the points $ v\in \bC^m$
 for which there exists a  sequence $(p_j)_{j} $ in $X\setminus \Sing  X$ and, for each $j$, there is a vector  $ v_j \in T_{p_j} X$ such that    $  \lim_{j\to\infty} \|p_j\| = \infty$   and $ \lim_{j\to \infty} v_j = v $.

    \item[$\bullet$] The Whitney tangent cone $C_{5, \infty}(X)$ of $X$ at infinity is defined as the set of points $v \in\bC^m$ for which there exist  sequences $(p_j)_{j}$ and $(q_j)_{j}$  in  $X$ and $(t_j)_{j}$ in $\bC$ such that  $\lim_{j\to\infty} \|p_j\|=\lim_{j\to\infty} \|q_j\|=\infty$ and $\lim_{j\to\infty} t_j (p_j -q_j)=v$.
\end{itemize}
\end{definition}

By Definition \ref{d:cones}, for $i=3, 4, 5$, the sets $C_{i,\infty}(X) \subset \bC^m$ are closed and they  are complexly homogeneous in the sense that for any $v \in C_{i,\infty}(X)$ and $\lambda \in \bC$ the vectors  $\lambda v$ also belong to $C_{i,\infty}(X)$. 

In section \S\ref{s:2}, we prove basic properties of $C_{4,\infty}(X)$ and $C_{5, \infty}(X)$. We prove that $C_{4,\infty}(X)$ and $C_{5, \infty}(X)$ are algebraic sets and that $C_{3, \infty}(X) \subset C_{4,\infty}(X) \subset C_{5, \infty}(X)$. Moreover, we prove that $\dim  C_{5, \infty}(X) \leq 2k+1$. Then, since $\dim C_{3, \infty}(X) = k$ (see \cite[Prop. 2.10]{DR}), it follows that $k \leq \dim C_{4,\infty}(X)\leq \dim C_{5, \infty}(X) \leq 2k+1$; see Propositions \ref{p:cones}, \ref{p:C5-alg} and \ref{p:C4-alg}. 

In Section \ref{s:3}, we prove that any pure $k$-dimensional algebraic set $X \subset \bC^m$ for which $C_{5, \infty}(X)$ has  pure dimension $k$ must be an affine linear subspace of $\bC^m$; see Theorem \ref{t:dim-Ptg}. 

The aim of Section \ref{s:4} is to give versions at infinity of the above results of Stutz  for complex algebraic set in $\bC^m$ as follows. Let $X\subset\bC^m$ be an unbounded pure $k$-dimensional algebraic set. Let $W\subset  \bC^m$ be a fixed $m-k$ dimensional vector space such that $W \cap C_{3,\infty}(X)=\{ 0\}$ (this always exists by Lemma \ref{p:exitrans}). We may use the coordinates $(x, y) \in V\times W= \bC^m$, where $V$ and $W$ are orthogonal. Let  $\pi: X  \to V$ be the restriction to $X$ of the canonical projection  $(x,y) \in V\times W \to x \in V$. We denote by $B_R$ the  ball in $ \bC^m$ of radius $R$ centered at the origin.  We have:

\begin{theorem}\label{t:St1} Suppose  $ C_{4, \infty }(X) \cap W= \{ 0\}$. Then, there exists a sufficiently large $R$  such that $ \Sing X \setminus \bar B_R = \Sing\, \pi \setminus \bar B_R$. 
\end{theorem}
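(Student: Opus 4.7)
The inclusion $\Sing X \setminus \bar B_R \subset \Sing\,\pi \setminus \bar B_R$ holds for every $R$ because, by the standard convention, the critical locus $\Sing\,\pi$ of the restriction $\pi: X\to V$ contains $\Sing X$. Hence the whole content of the theorem is the reverse inclusion for $R$ large enough. Equivalently, I need to show that for all sufficiently large $R$, every \emph{regular} point $p\in X\setminus \Sing X$ with $\|p\|>R$ is a regular point of $\pi$, that is, $d\pi_{|T_pX}:T_pX\to V$ is an isomorphism. Since $\dim T_pX=k=\dim V$ and $W=\ke \pi$, this is equivalent to the purely linear condition
\[
T_pX \cap W = \{0\}.
\]

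I would then argue by contradiction. Suppose there is no such $R$. Then there exists a sequence $(p_j)_j$ in $X\setminus \Sing X$ with $\|p_j\|\to \infty$ such that $T_{p_j}X \cap W \neq \{0\}$ for every $j$. For each $j$, choose a vector $v_j \in T_{p_j}X\cap W$ with $\|v_j\|=1$; such a normalization is available because $T_{p_j}X\cap W$ is a complex linear subspace of $W$. By compactness of the unit sphere of $W$, after extracting a subsequence, $v_j\to v$ for some $v\in W$ with $\|v\|=1$.

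Now I would invoke Definition~\ref{d:cones}: since $p_j\in X\setminus\Sing X$, $v_j\in T_{p_j}X$, $\|p_j\|\to\infty$ and $v_j\to v$, the limit $v$ lies in $C_{4,\infty}(X)$. Because each $v_j\in W$ and $W$ is closed, we also have $v\in W$. Thus $v\in C_{4,\infty}(X)\cap W$ with $v\neq 0$, contradicting the hypothesis $C_{4,\infty}(X)\cap W=\{0\}$. This contradiction produces the desired $R$.

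The step I expect to require most care is the very first one: matching the definition of $\Sing\,\pi$ used here with the criterion $T_pX\cap W\neq\{0\}$ at regular points of $X$, and observing that, in contrast to $C_{3,\infty}(X)$, the cone $C_{4,\infty}(X)$ is defined by limits of tangent vectors without any rescaling, so the naive normalization $\|v_j\|=1$ is exactly the right one to produce a nonzero limit $v\in C_{4,\infty}(X)\cap W$. Once these conventions are in place, the argument is just a compactness and contradiction step.
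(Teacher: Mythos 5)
Your proof is correct and follows essentially the same route as the paper: both reduce the statement to showing $T_pX\cap W=\{0\}$ at regular points far from the origin, and both argue by contradiction, normalizing tangent vectors in $T_{p_j}X\cap W=T_{p_j}X\cap\ker\pi$ to extract a nonzero limit in $C_{4,\infty}(X)\cap W$. The only cosmetic difference is that the paper phrases the contradiction hypothesis via a sequence in $\Sing\,\pi\setminus\Sing X$ rather than directly via the linear-algebra criterion, which amounts to the same thing.
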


By a similar argument from the proof of Lemma \ref{p:exitrans}, it follows that $\dim  C_{4, \infty }(X) =k$ forces $ C_{4, \infty }(X) \cap W= \{ 0\}$. 

We denote $y=(y_1, \ldots, y_{m-k})$ and  $W^i:=\{(0,y) \in  \bC^m \mid y_i = 0 \},$ for $i=1, \ldots, m-k$. We denote $\pi_i:  \bC^m=V\times W \to \bC^k \times \bC=\bC^{k+1}$ the projection defined by $\pi_i(x,y)=(x, y_i)$. It follows that $\ker \pi_i = W^i$.  We have: 

\begin{theorem}\label{t:St2} Suppose that  $C_{5,\infty}(X) \cap W^i = \{0\}$. Then $\pi_i(X)$ is an algebraic  hypersurface in $\bC^{k+1}$ and, for  sufficiently large $R$, the restriction  $\pi_i: X \setminus \bar B_R \to \pi_i(X \setminus \bar B_R)$ is a homeomorphism.  
\end{theorem}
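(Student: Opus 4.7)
The plan is to extract two consequences from the hypothesis $C_{5,\infty}(X)\cap W^i=\{0\}$. First, via the inclusion $C_{3,\infty}(X)\subset C_{5,\infty}(X)$ established earlier in the paper, I obtain $C_{3,\infty}(X)\cap W^i=\{0\}$, which I will use to show that $\pi_i|_X:X\to\bC^{k+1}$ is a proper map. Second, the full $C_5$ hypothesis will directly yield injectivity of $\pi_i$ on $X\setminus\bar B_R$ for large $R$. These two ingredients together will deliver algebraicity of $\pi_i(X)$, the correct dimension, and the homeomorphism claim.

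For properness I will argue by contradiction: given a sequence $(p_j)\subset X$ with $\|p_j\|\to\infty$ and $(\pi_i(p_j))$ bounded, set $t_j=1/\|p_j\|$ and pass to a subsequence with $t_jp_j\to v$ of norm $1$. By definition $v\in C_{3,\infty}(X)$, while $\pi_i(v)=\lim t_j\pi_i(p_j)=0$ forces $v\in W^i$, contradicting $C_{3,\infty}(X)\cap W^i=\{0\}$. Properness then implies $\pi_i(X)$ is Euclidean-closed in $\bC^{k+1}$; combined with constructibility from Chevalley's theorem, it is Zariski-closed, hence algebraic.

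Injectivity at infinity will follow from a parallel rescaling applied to differences: if no such $R$ existed, one could produce sequences $(p_j),(q_j)$ in $X$ with $p_j\ne q_j$, $\|p_j\|,\|q_j\|\to\infty$, and $p_j-q_j\in W^i\setminus\{0\}$; setting $t_j=1/\|p_j-q_j\|$ and extracting a subsequential limit $v\in W^i$ of norm $1$ places $v\in C_{5,\infty}(X)\cap W^i$, contradicting the hypothesis.

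To finish, since $X$ is pure of dimension $k\ge 1$ every irreducible component of $X$ is unbounded, so $X\setminus\bar B_R$ has pure dimension $k$; injectivity yields $\dim\pi_i(X\setminus\bar B_R)=k$, and the general bound $\dim\pi_i(X)\le\dim X=k$ then forces $\dim\pi_i(X)=k$, making $\pi_i(X)$ a hypersurface in $\bC^{k+1}$. Finally, $\pi_i:X\setminus\bar B_R\to\pi_i(X\setminus\bar B_R)$ is continuous and bijective, and it is proper because $X\setminus\bar B_R$ is closed in $X$ and $\pi_i|_X$ is proper; a continuous bijective proper map between locally compact Hausdorff spaces is a homeomorphism. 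I expect the most delicate step to be the algebraicity claim, which genuinely needs the topological properness of the first step to bridge Chevalley constructibility and Zariski-closedness; by comparison, the two rescaling contradictions underlying properness and injectivity are routine once one has identified the correct tangent cone at infinity.
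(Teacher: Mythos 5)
Your overall strategy matches the paper's: properness of $\pi_i|_X$ deduced from $C_{3,\infty}(X)\cap W^i=\{0\}$ (via Proposition \ref{p:cones}), algebraicity and dimension $k$ of the image, and injectivity of $\pi_i$ outside a large ball via a normalization of $p_j-q_j\in W^i$ — all of that is correct and is essentially what the paper does (your dimension count through injectivity of the restriction is only cosmetically different from the paper's direct use of properness). The genuine gap is in the last step. You assert that $\pi_i: X\setminus\bar B_R\to\pi_i(X\setminus\bar B_R)$ is proper ``because $X\setminus\bar B_R$ is closed in $X$ and $\pi_i|_X$ is proper.'' But $X\setminus\bar B_R$ is \emph{open} in $X$, not closed: its closure in $X$ contains points of $X$ on the sphere $\|z\|=R$. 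Hence properness of $\pi_i|_X$ does not pass to the restriction onto its image: for a compact $K\subset\pi_i(X\setminus\bar B_R)$, the set $(\pi_i|_X)^{-1}(K)$ is compact in $X$, but its intersection with the open set $X\setminus\bar B_R$ need not be. The concrete failure mode is a sequence $(z_j)$ in $X\setminus\bar B_R$ converging to a point $p\in X$ with $\|p\|=R$, with $\pi_i(z_j)\to\pi_i(p)$ and $\pi_i(p)\in\pi_i(X\setminus\bar B_R)$ because some other point $x\in X\setminus\bar B_R$ satisfies $\pi_i(x)=\pi_i(p)$; nothing in your argument excludes this for a fixed $R$, and without properness a continuous bijection onto the image need not have continuous inverse.

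This is precisely the point the paper devotes a separate argument to (its Claim 2). Assuming the restriction fails to be proper for every large $n$, it produces for each $n$ a point $p_n\in X$ with $\|p_n\|=n$ arising as such a boundary limit, together with $x_n\in X\setminus\bar B_n$ with $\pi_i(x_n)=\pi_i(p_n)$, so that $p_n-x_n\in W^i\setminus\{0\}$ while $\|p_n\|,\|x_n\|\to\infty$; normalizing $p_n-x_n$ and passing to a subsequence yields a nonzero vector in $C_{5,\infty}(X)\cap W^i$, contradicting the hypothesis. So the homeomorphism conclusion requires one more application of the $C_{5,\infty}$ hypothesis (or an equivalent analysis near the sphere $\|z\|=R$), not merely injectivity plus properness of $\pi_i|_X$; with such a Claim-2-type argument inserted, your proof closes.
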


By a similar argument from the proof of Lemma \ref{p:exitrans}, it follows that for the hypothesis of Theorem \ref{t:St2} to be satisfied, it is necessary that $\dim C_{5, \infty}(X) \leq k+1$.  
 
\section{Properties of the tangent cones at infinity}\label{s:2}

\begin{proposition}\label{p:cones} Let $X\subset\bC^m$ be an unbounded pure $k$-dimensional algebraic set. Then, $ C_{3, \infty}(X) \subset C_{4,\infty}(X) \subset C_{5, \infty}(X)$.
\end{proposition}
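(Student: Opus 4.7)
Two inclusions must be established; I plan to treat them separately, the first by a direct diagonal argument and the second by reduction to Whitney's classical inclusion $C_3(V,q) \subset C_4(V,q)$ at a finite point $q$ via a projective change of chart. For $C_{4,\infty}(X) \subset C_{5,\infty}(X)$, given $v \in C_{4,\infty}(X)$ witnessed by $p_j \in X \setminus \Sing X$ with $\|p_j\| \to \infty$ and $v_j \in T_{p_j}X$ with $v_j \to v$, I would use that at a smooth point every tangent vector is a limit of secants: for each $j$ one can find $q_{j,n} \in X$ with $q_{j,n} \to p_j$ and $s_{j,n} \in \bC$ such that $s_{j,n}(p_j - q_{j,n}) \to v_j$ as $n \to \infty$. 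A diagonal extraction then yields $n(j)$ such that $q_j := q_{j,n(j)}$ satisfies $\|q_j - p_j\| < 1/j$ and $\|s_{j,n(j)}(p_j - q_j) - v_j\| < 1/j$; setting $t_j := s_{j,n(j)}$, one has $\|q_j\|, \|p_j\| \to \infty$ and $t_j(p_j - q_j) \to v$, placing $v$ in $C_{5,\infty}(X)$.

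For $C_{3,\infty}(X) \subset C_{4,\infty}(X)$, the case $v = 0$ is immediate (choose any smooth points $p_j \in X \setminus \Sing X$ with $\|p_j\| \to \infty$, which exist by density of $X\setminus \Sing X$ in $X$ and unboundedness of $X$, and take $v_j = 0 \in T_{p_j}X$), so assume $v \neq 0$ and $v = \lim t_j p_j$. Pass to the projective closure $\bar X \subset \bP^m$: a subsequence argument, using $\|p_j\| \to \infty$ and $t_j p_j \to v \neq 0$, shows that the class $[v]$ lies in $\bar X \cap H_\infty$, where $H_\infty \subset \bP^m$ is the hyperplane at infinity. Choose an index $i$ with $v_i \neq 0$ and pass to the affine chart $U_i$ of $\bP^m$; in $U_i$-coordinates $[v]$ becomes a finite point $q \in \bar X \cap U_i$ and $p_j \to q$. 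A rescaling $s_j \in \bC$ can be chosen so that $s_j(p_j - q) \to v'$ for some nonzero $v' \in \bC^m$, placing $v' \in C_3(\bar X \cap U_i, q)$. Applying the classical inclusion at the finite point $q$ produces smooth points $\tilde p_n$ of $\bar X$ converging to $q$ together with tangent vectors $w_n \in T_{\tilde p_n}(\bar X \cap U_i)$ with $w_n \to v'$; since $\dim(\bar X \cap H_\infty) \leq k-1$, the open subset $X = \bar X \setminus H_\infty$ is dense in $\bar X$ near $q$, and one may arrange $\tilde p_n \in X \setminus \Sing X$. Pushing the $w_n$ forward to the chart $U_0$ containing $X$, they become vectors in $T_{\tilde p_n}X \subset \bC^m$ that are asymptotically large complex multiples of $v$; complex-rescaling by appropriate $\lambda_n \in \bC$ (permissible since each $T_{\tilde p_n}X$ is a complex linear subspace) produces tangent vectors converging exactly to $v$, whence $v \in C_{4,\infty}(X)$.

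The principal obstacle is the chart-change computation between $U_i$ and $U_0$: one must verify that the push-forward of $w_n$ is asymptotically a complex scalar multiple of the radial vector $v$, so that the compensating scalars $\lambda_n$ can be chosen to send the limit exactly to $v$ rather than to some unrelated scalar multiple. This in turn hinges on choosing $s_j$ so that the limit $v'$ has a nonzero component in the direction transverse to $H_\infty$ in $U_i$ (the $y_0$-direction in the standard notation for $U_i$-coordinates), which should be arranged by taking $s_j$ proportional to the $y_0$-coordinate of $p_j$ in $U_i$.
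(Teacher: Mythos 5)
Your second inclusion, $C_{4,\infty}(X)\subset C_{5,\infty}(X)$, is correct and is essentially the paper's own argument (tangent vectors at smooth points are limits of secants, followed by a diagonal extraction). The problem lies in your treatment of $C_{3,\infty}(X)\subset C_{4,\infty}(X)$.

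The step you flag as ``the principal obstacle'' is in fact a genuine gap, and the proposed fix fails. Your reduction to the classical inclusion $C_3(\bar X\cap U_i,q)\subset C_4(\bar X\cap U_i,q)$ at the point at infinity $q=[v]$ needs a secant limit $v'$ whose component transverse to $H_\infty$ (the $u_0$-, in your words $y_0$-, component) is nonzero, and in general no choice of the scalars $s_j$ can produce such a $v'$. Take $X=\{z_2=z_1^2\}\subset\bC^2$, so $v=(0,1)$ and $i=2$; in the chart $U_2$ with coordinates $(u_0,u_1)=(x_0/x_2,x_1/x_2)$ one has $\bar X\cap U_2=\{u_0=u_1^2\}$, $q=(0,0)$, and $C_3(\bar X\cap U_2,q)=\{u_0=0\}\subset H_\infty$; a point $p_j=(z_1,z_1^2)$ has $U_2$-coordinates $(1/z_1^2,1/z_1)$, so normalizing the $u_0$-component of $s_j(p_j-q)$ forces the $u_1$-component to blow up like $z_1$. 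Thus every attainable $v'$ is tangent to $H_\infty$, which is exactly the degenerate case. In that case the classical inclusion only gives smooth points $\tilde p_n\to q$ and $w_n\to v'$ with no control on the ratio $w_{n,0}/u_0(\tilde p_n)$; since the chart change sends $w_n$ to $-\frac{w_{n,0}}{u_0^2}\,a_n+\frac{1}{u_0}\,b_n$ with $a_n\to v/v_i$ and $b_n$ tending to the $H_\infty$-tangential part of $v'$, the rescaled push-forwards can converge to a vector that is not a complex multiple of $v$ (for instance, if $w_{n,0}=0$ identically one only recovers the tangential limit), so your argument would place some vector in $C_{4,\infty}(X)$ but not necessarily $v$. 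The paper circumvents precisely this point by a different mechanism: after replacing $p_j$ by nearby smooth points and applying the curve selection lemma, it uses the reparametrization lemma (Lemma 2.2 of \cite{LP}) to obtain an analytic curve $\varphi$ in $X\setminus\Sing X$ with $\|\varphi(s)\|\to\infty$ and $\lim_{s\to 0}\varphi(s)/\|\varphi(s)\|=-\lim_{s\to 0}\varphi'(s)/\|\varphi'(s)\|=v/\|v\|$; since $\varphi'(s)\in T_{\varphi(s)}X$, this ties the radial direction $v$ directly to actual tangent vectors, which is exactly the quantitative control your chart-change reduction lacks. To salvage your route you would need such a rate statement at $q$ (relating the $u_0$-component of tangent vectors to $u_0(\tilde p_n)$ along a curve in $X$ approaching $q$); the bare inclusion $C_3\subset C_4$ at $q$ does not provide it.
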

\begin{proof} Firstly, we prove that $C_{3,\infty}(X) \subset C_{4, \infty}(X)$.  

 Let $v \in C_{3, \infty}(X)$. By Definition \ref{d:cones},  $0\in C_{3,\infty}(X) \cap  C_{4, \infty}(X)$. So it is sufficient to consider  $v\ne 0$. 
 
 By Definition \ref{d:cones}, there exist sequences $(p_j)_j$ in $X$ and $(t_j)_j$ in $\bC$  such that $\lim_{j\to\infty} \|p_j\|=\infty $ and $\lim_{j\to\infty} t_j p_j = v$.  It is obvious that $\lim_{j\to\infty} t_j=0.$

Since $X \setminus  \Sing X $ is dense on $X$, there exists a sequence  $(q_j )_j$  in $X \setminus  \Sing X $  such that $\| p_j - q_j\| <\frac{1}{j}$. Then, by triangle inequality, we have  $\|p_j\| \leq \|p_j -q_j\| + \|q_j\| \leq \frac{1}{j} + \|q_j\|.$ Since $\lim_{j\to\infty} \|p_j\|=\infty$, also $\lim_{j\to\infty} \|q_j\| = \infty$. 

Moreover, by triangle inequality, we have: 
\[  \|t_jq_j -v\| \leq \|t_jq_j -t_j p_j\| + \|t_jp_j -v \| \leq  |t_j| \, \|q_j -p_j\| + \|t_jp_j -v \|.\] 

Since both members of the right side converge to $0$, we have $\lim_{j\to\infty} t_jq_j =  v.$ 

Since $q_j \in X\setminus \Sing X$, it follows  by the  Curve Selection Lemma that there exist   analytic curves  $\psi:(0,\epsilon) \to (X\setminus \Sing X)$  and $t:(0,\epsilon) \to \bC$ such that $\lim_{s\to 0} \|\psi(s)\|=\infty$  and $\lim_{s\to 0} t(s)\psi(s) =v$. 

Now we can use the same arguments from the proof of Lemma 2.2 of \cite{LP} and we get an analytic curve $\varphi$ which is a reparametrization of $\psi$ that satisfies: 
\begin{enumerate}
    \item[a)] $\varphi(s) \in X\setminus \Sing X$, for any $s$. 
    \item [b)] $\lim_{s\to 0}\|\varphi(s)\|=\infty.$
    \item[c)] $\lim_{s\to 0} \frac{\varphi(s)}{\|\varphi(s)\|}= - \lim_{s\to 0} \frac{\varphi'(s)}{\|\varphi'(s)\|}= \frac{v}{\|v\|}.$ 
\end{enumerate}

It follows by a), b) and c)  that $-\frac{v}{\|v\|} \in C_{4, \infty}(X)$. Since $ C_{4, \infty}(X)$ is a cone, it follows that  $v\in  C_{4, \infty}(X) $.  

Now, we prove that $C_{4,\infty}(X) \subset C_{5, \infty}(X)$. Let $v \in C_{4,\infty}(X)$. By the definition of $C_{4,\infty}(X)$, there exists a sequence $(p_j)_{j}$ in $X\setminus \Sing X $ and, for each $j$, a vector $v_j \in T_{p_j} X$,  such that $\lim_{j\to\infty} \|p_j\| = \infty$ and $\lim_{j\to\infty} v_j = v.$

It follows from the definition of $T_{p_j} X$ that, for any $j$, there exists $q_j$ in $X\setminus \Sing X $ and $t_j$ in $\bR$ such that $\|p_j - q_j\| \leq \frac{1}{j}$, $t_j \leq \frac{1}{j}$, and 
\begin{equation}\label{eq:C-345-a}
    \|\frac{p_j- q_j}{t_j} - v_j\|\leq \frac{1}{j}.
\end{equation}

It follows by triangle inequality that $\|p_j\| \leq \|p_j -q_j\| + \|q_j\| \leq \frac{1}{j} + \|q_j\|.$ Since $\lim_{j\to\infty} \|p_j\|=\infty$, we have $\lim_{j\to\infty} \|q_j\|=\infty$.  It follows by \eqref{eq:C-345-a} and $\lim_{j\to\infty}  v_j=v$ that 
\[ \lim_{j\to \infty}  \frac{1}{t_j} (p_j- q_j) = v, \]
which shows  that $v\in C_{5,\infty}(X)$. 
\end{proof}

The next two propositions are motivated by the following facts: by Corollary 3.1 of \cite{LP},  $C_{3,\infty}(X)$ is an algebraic set; by Proposition 2.10 of \cite{DR}, $\dim C_{3,\infty}(X) = \dim X$. 

We denote the complex projective space of dimension $m$ by $\bP^m$. We identify $\bC^m$  with the open set $\{[x_0: x_1: \ldots :x_m] \in \bP^m \mid x_0\neq 0 \}$ of $\bP^m$ through the map $\tau:\bC^m \to \bP^m$, $\tau(x_1, \ldots, x_m)=[1:x_1:\ldots:x_m]$. The hyperplane at infinity is defined by $H_{\infty}:=\{ [x_0: x_1: \ldots : x_m] \in \bP^m \mid x_0=0 \} $.

\begin{proposition}\label{p:C5-alg} Let $X\subset  \bC^m$ be an unbounded pure $k$-dimensional algebraic set. Then, $C_{5, \infty}(X)$ is an algebraic set of dimension less than or equal to $2k+1$. 
\end{proposition}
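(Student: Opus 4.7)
The plan is to realize $C_{5,\infty}(X)$ as the affine part of the projection onto the last factor of a projective algebraic set obtained by compactifying the graph of the ``secant-scaling'' morphism. First I would consider the morphism $\phi \colon X \times X \times \bC \to \bC^m$, $\phi(p,q,t) = t(p-q)$, and its graph $\Gamma \subset X \times X \times \bC \times \bC^m$, which is isomorphic to $X \times X \times \bC$ and hence has dimension $2k+1$. Using the standard affine charts $\bC \subset \bP^1$ and $\bC^m \subset \bP^m$, I would close $\Gamma$ up to $\bar \Gamma$ inside the projective variety $\bar X \times \bar X \times \bP^1 \times \bP^m$, where $\bar X \subset \bP^m$ is the projective closure of $X$ via $\tau$. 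Since $\Gamma$ is open and dense in $\bar \Gamma$, one has $\dim \bar \Gamma = 2k+1$.

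Writing $X_\infty := \bar X \cap H_\infty$, I would next set $Z := \bar \Gamma \cap (X_\infty \times X_\infty \times \bP^1 \times \bP^m)$ and let $P \subset \bP^m$ be the image of $Z$ under the projection $\pi$ onto the last factor. Since $\pi$ is a morphism of projective varieties, hence a closed map, $P$ is a projective algebraic subset of $\bP^m$, so $P \cap \bC^m$ is an affine algebraic set of dimension at most $\dim P \le \dim Z \le \dim \bar \Gamma = 2k+1$. The crux of the argument is the identification $C_{5,\infty}(X) = P \cap \bC^m$. For the forward inclusion, given $v \in C_{5,\infty}(X)$ with realizing sequences $(p_j, q_j, t_j)$, the points $(p_j, q_j, [1{:}t_j], [1{:}t_j(p_j-q_j)])$ lie in $\Gamma$; any subsequential limit in the compact ambient space must have its first two components in $X_\infty$ (because $\|p_j\|, \|q_j\| \to \infty$) and its last component equal to $v \in \bC^m$, so it lies in $Z$ and projects to $v$. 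For the reverse inclusion, density of $\Gamma$ in $\bar \Gamma$ lets me approximate any preimage of $v \in P \cap \bC^m$ by points of $\Gamma$, and translating the projective convergence of the first, second and fourth coordinates back into affine statements yields $p_j, q_j \in X$ with $\|p_j\|, \|q_j\| \to \infty$ and $t_j \in \bC$ with $t_j(p_j - q_j) \to v$.

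The step I expect to require the most care is precisely this $\subseteq / \supseteq$ identification, i.e.\ translating convergence in $\bar X \times \bar X \times \bP^1 \times \bP^m$ back and forth with the analytic notions of Definition \ref{d:cones}. The key points are that a projective limit in $\bP^m$ of a sequence in $\bC^m$ lies on $H_\infty$ iff the affine norms blow up, and lies in $\bC^m$ iff the sequence converges in $\bC^m$; both are standard features of the compactification, but one must ensure that when approximating a point of $Z$ with affine last coordinate, the approximating points of $\Gamma$ really do have both $p_j$ and $q_j$ escaping to infinity (which is forced by the projective limits of the first two coordinates lying in $X_\infty$). Once these translations are in hand, algebraicity of $C_{5,\infty}(X)$ follows because $P \cap \bC^m$ is affine algebraic, and the dimension bound is the trivial chain $\dim C_{5,\infty}(X) \le \dim P \le \dim Z \le \dim \bar \Gamma = 2k+1$.
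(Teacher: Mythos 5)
Your proposal is correct and takes essentially the same route as the paper, which closes up the secant incidence set $\{(x,y,v)\in X\times X\times\bC^m \mid (x-y)\wedge v=0\}$ (with the diagonal preimage removed) in $\bP^m\times\bP^m\times\bC^m$, intersects with $H_{\infty}\times H_{\infty}\times\bC^m$, and projects to the last factor. The only differences are bookkeeping: your graph of $(p,q,t)\mapsto t(p-q)$ avoids the diagonal dimension issue automatically and, by also projectivizing the scalar and the target, obtains closedness of the image from properness, where the paper proves constructibility and handles closedness by a separate diagonal argument.
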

\begin{proof}  
Let  $A=\{ (x, y, v) \in X\times X \times \bC^m \mid (x-y) \wedge v =0 \}$.  We have that  $A$ is an algebraic set of $\bC^m\times \bC^m \times \bC^m$. We consider $\sigma: A \to X\times X$, defined by $\sigma(x, y, v)=(x, y)$.
	
We denote by $\Delta \subset X\times X $ the subset of $X\times X$ such  that $(x,y) \in \Delta$ if and only if $x=y $. For any $(x,y) \notin \Delta$, the fiber $\sigma^{-1}(x,y)$ has dimension equal to $1$ (in fact, $\sigma^{-1}(x,y)$ is the set of the vectors that are complex proportional to $x -y$). Therefore, $\dim A= 2q +1$.  
 
We consider $A$ and $\sigma^{-1}(\Delta)$ as subsets  of $ \bC^m\times  \bC^m \times  \bC^m \subset \bP^m \times \bP^m \times  \bC^m$. Let $\mathcal{A}:= \overline{A\setminus \sigma^{-1}(\Delta)}$ be the closure of $A\setminus \sigma^{-1}(\Delta)$ in $\bP^m \times\bP^m \times \bC^m$, which is an algebraic set. We have $\dim \mathcal{A}= 2k+1$. 
	
Let $Z:= \mathcal{A} \cap (H_{\infty} \times H_{\infty} \times  \bC^m)$ and   $\tau: Z \to  \bC^m$ defined by $\tau(x,y,v)=v$. It follows that $Z$ is an algebraic set and by definition of $C_{5, \infty}(X)$, we have  $\tau(Z)=C_{5, \infty}(X)$. This shows that $C_{5, \infty}(X)$ is a constructible set. The proof  that $C_{5, \infty}(X)$ is closed in $\bC^m$ is straightforward (diagonal argument). Therefore, $C_{5, \infty}(X)$ is an algebraic set. 

Since $\dim \mathcal{A}= 2k+1$, it follows that $\dim Z \leq 2k+1$. Therefore, the dimension of $\tau(Z)=C_{5, \infty}(X)$ is less than or equal to  $2k+1$.
\end{proof}

\begin{proposition}\label{p:C4-alg} Let $X\subset  \bC^m$ be an unbounded pure $k$-dimensional algebraic set. Then, $C_{4, \infty}(X)$ is an algebraic set and $k \leq \dim C_{4, \infty}(X)\leq 2k+1$.  
\end{proposition}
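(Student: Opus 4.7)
The plan is to adapt the projective-closure construction used for $C_{5,\infty}(X)$ in Proposition \ref{p:C5-alg}, replacing the secant-line incidence by the tangent-space incidence on the smooth part of $X$.

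Concretely, I would first form the algebraic set
\[ B := \{(p,v)\in (X\setminus\Sing X)\times \bC^m \mid v \in T_p X\},\]
view it inside $\bP^m\times\bC^m$ via the standard chart identifying $\bC^m$ with $\{x_0\neq 0\}\subset\bP^m$ in the first factor, and take its closure $\mathcal{B}\subset \bP^m\times\bC^m$, which is algebraic. Set $Z:=\mathcal{B}\cap (H_{\infty}\times \bC^m)$ and let $\sigma:Z\to\bC^m$ be the projection $\sigma(p,v)=v$. The content of Definition \ref{d:cones} for $C_{4,\infty}(X)$ translates precisely to $\sigma(Z)=C_{4,\infty}(X)$: a defining sequence $(p_j,v_j)\in B$ with $\|p_j\|\to\infty$ and $v_j\to v$ yields, after passing to a subsequence in the compact factor $\bP^m$, a point of $Z$ projecting to $v$; conversely any point of $Z$ is the limit of such a sequence, in which convergence to a point of $H_{\infty}$ in the first component forces $\|p_j\|\to\infty$ in $\bC^m$. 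This identifies $C_{4,\infty}(X)$ as the image of an algebraic set under a morphism, hence as a constructible set. A diagonal argument analogous to the one in the proof of Proposition \ref{p:C5-alg} shows that $C_{4,\infty}(X)$ is closed in $\bC^m$; being both constructible and closed, it is algebraic.

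The dimension bounds are then immediate consequences of what has already been proved: by Proposition \ref{p:cones} we have the inclusions $C_{3,\infty}(X)\subset C_{4,\infty}(X)\subset C_{5,\infty}(X)$, and combining these with $\dim C_{3,\infty}(X)=k$ from \cite[Prop.~2.10]{DR} and with the upper bound $\dim C_{5,\infty}(X)\leq 2k+1$ from Proposition \ref{p:C5-alg} gives $k\leq \dim C_{4,\infty}(X)\leq 2k+1$.

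The main technical point to verify carefully is the equality $\sigma(Z)=C_{4,\infty}(X)$: one must check that the closure taken in the mixed projective-affine space $\bP^m\times\bC^m$ retains exactly the limit behaviour prescribed by the definition, namely that a sequence $(p_j,v_j)\in B$ converging in this space to a point of $H_{\infty}\times\bC^m$ is characterised by $\|p_j\|\to\infty$ together with genuine convergence of the tangent-vector component to $v$.
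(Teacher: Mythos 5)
Your proposal is correct and follows essentially the same route as the paper: the paper realizes your incidence set by taking generators $f_1,\dots,f_l$ of $\mathcal{I}(X)$ and setting $A=\{(p,v)\in X\times\bC^m \mid v\in\ker d_pF\}$ (so that $\ker d_pF=T_pX$ at smooth points, which is precisely the justification your set $B$ needs to be algebraic, strictly speaking locally closed, after discarding $\Sing X\times\bC^m$), then closes it in $\bP^m\times\bC^m$, intersects with $H_{\infty}\times\bC^m$, projects to get constructibility, and concludes closedness by a diagonal argument. The dimension bounds are obtained exactly as you state, from Propositions \ref{p:cones} and \ref{p:C5-alg} together with \cite[Prop.~2.10]{DR}.
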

\begin{proof} Let $\mathcal{I}(X)$ be the vanishing ideal of $X$ and let $\{f_1, \ldots, f_l\}$ be a set of generators of $\mathcal{I}(X)$. We consider $F=(f_1, \ldots, f_l):\bC^m \to \bC^l$. For any $p\in X \setminus \Sing X$, we have $\dim \ker d_p F = k $ and $\ker d_pF=T_pX$, where $d_p F$ denotes the differential of $F$ at $p$; see section 2 of \cite{Mil}. 

Let $A=\{(p,v) \in X \times \bC^m \mid v \in \ker d_pF \} $. Then $A$ is an algebraic set of $\bC^m\times \bC^m$. Since $\Sing X$ is an algebraic set, it follows that $\Sing X \times \bC^m$ is an algebraic set of $\bC^m \times \bC^m$.  We consider $A$ and $\Sing X \times \bC^m$ as subsets of $ \bC^m\times  \bC^m  \subset \bP^m \times  \bC^m$. Let $\mathcal{B}:= \overline{A\setminus (\Sing X \times \bC^m})$ be the closure of $A\setminus (\Sing X \times \bC^m)$ in $\bP^m \times  \bC^m$, which is an algebraic set.  
	
Let $Z:= \mathcal{B} \cap (H_{\infty} \times \bC^m)$ and  let $\tau: Z \to  \bC^m$ be the restriction to $Z$ of the projection $(x, v) \in \bP^m \times \bC^m \to v \in \bC^m$. Then  $Z$ is an algebraic set and, by the definition of $C_{4, \infty}(X)$, we have  $\tau(Z)=C_{4, \infty}(X)$. This shows that $C_{4, \infty}(X)$ is a constructible set. The proof that $C_{4, \infty}(X)$ is closed in $\bC^m$ is straightforward (diagonal argument). Therefore, $C_{4, \infty}(X)$ is an algebraic set. 

By Proposition 2.10 of \cite{DR}, $\dim C_{3,\infty}(X) = \dim X$. By Proposition \ref{p:C5-alg}, $\dim C_{5,\infty}(X) \leq 2k+1$. It follows by Proposition \ref{p:cones} that  $k \leq \dim C_{4, \infty}(X)\leq 2k+1$, which completes the proof. 
\end{proof}

The above results can be seen as versions at infinity of results of Whitney on tangent cones at a point of an analytic variety; see \cite{Ch},  \cite{Wh-65} and \cite{Wh}.  

\section{On the dimension of $\dim C_{5,\infty}(X)$}\label{s:3}

The main result of this section is Theorem \ref{t:dim-Ptg}. We begin by recalling the definition of an algebraic region.  

\begin{definition}[{See p. 672 of \cite{Ru}}]\label{d:Ru} Let $m\geq 2$.  A set $\Omega \subset \bC^m$ is called an algebraic region of type $(k,m)$ if there exist vector subspaces $V_1, V_2$ in $ \bC^m$ and positive real numbers $A, B$ such that the following conditions hold: $\dim V_1=k$, $\dim V_2=m-k$, $ \bC^m = V_1 \oplus V_2$ and $\Omega$ consists of   those $z\in  \bC^m$ for which:     
\begin{equation}\label{eq:alg.reg.}
	 \|z''\| \leq A (1 + \|z'\|)^B,
\end{equation}
	where $z=z' + z''$, with $z'\in V_1$ and $z'' \in V_2$. 
\end{definition} 
 
We have the following geometric property for algebraic sets:  

\begin{theorem}[See Theorem 2 of \cite{Ru}]\label{t:Ru} A closed analytic subset  $V\subset  \bC^m$ of pure dimension $k$  is  algebraic  if and only if $V$ lies in some algebraic region $\Omega$ of type $(k,m)$. 
\end{theorem}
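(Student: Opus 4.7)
The plan is to prove both implications separately. The "only if" direction (algebraic $\Rightarrow$ lies in an algebraic region) is the easier half and is essentially a consequence of Noether normalization. I would choose a generic $k$-dimensional subspace $V_1 \subset \bC^m$ transverse to $C_{3,\infty}(V)$, set $V_2 := V_1^\perp$, and observe that the vanishing ideal $\mathcal{I}(V)$ contains, for each coordinate $z''_j$ of $V_2$, a polynomial $P_j(z', z''_j)$ that is monic of positive degree in $z''_j$ with coefficients polynomials in $z'$. Cauchy's bound on roots of a monic polynomial then yields $|z''_j| \leq C(1 + \|z'\|)^{B}$ on $V$ for suitable constants $C, B$, and the bound on $\|z''\|$ follows.

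For the "if" direction, suppose $V \subset \Omega$, where $\Omega$ is an algebraic region of type $(k, m)$ with decomposition $\bC^m = V_1 \oplus V_2$ and growth exponents $A, B$. Let $\pi: V \to V_1 \cong \bC^k$ be the restriction of the linear projection along $V_2$. First I would show $\pi$ is proper: if $\pi(p_j) \to w$, then $\|p'_j\|$ stays bounded, hence $\|p''_j\| \leq A(1 + \|p'_j\|)^B$ is bounded as well, so $(p_j)$ has a subsequence converging in $V$. Since $V$ is closed analytic of pure dimension $k$, the proper mapping theorem guarantees that $\pi$ is a finite surjective branched covering of $\bC^k$ of some degree $d$: the fibers are compact analytic, hence finite, and the image is closed analytic of dimension $k$, hence all of $\bC^k$.

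For each coordinate $z''_j$ of $V_2$ and each $\ell = 1, \ldots, d$, let $\sigma^{(j)}_\ell(w)$ denote the $\ell$-th elementary symmetric function of the values of $z''_j$ on $\pi^{-1}(w)$, counted with multiplicity. These functions are holomorphic on the complement of the branch locus of $\pi$ and locally bounded on all of $\bC^k$, so the Riemann removable singularity theorem extends each $\sigma^{(j)}_\ell$ to an entire function on $\bC^k$. The algebraic-region estimate yields $|\sigma^{(j)}_\ell(w)| \leq C(1 + \|w\|)^{\ell B}$, and the polynomial-growth Liouville theorem on $\bC^k$ forces each $\sigma^{(j)}_\ell$ to be a polynomial. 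Consequently
\[
Q_j(z', T) := T^d - \sigma^{(j)}_1(z') T^{d-1} + \cdots + (-1)^d \sigma^{(j)}_d(z')
\]
is a polynomial vanishing on $V$ at $T = z''_j$, so $V$ is contained in the algebraic set $\tilde V := \{(z', z'') : Q_j(z', z''_j) = 0,\ j = 1, \ldots, m-k\}$. Since the $Q_j$ are monic in their respective variables, $\tilde V$ projects to $\bC^k$ with finite fibers, hence has pure dimension $k$; combined with the purity of $V$, an identity-principle argument then shows every irreducible analytic component of $V$ coincides with an irreducible algebraic component of $\tilde V$, so $V$ is algebraic.

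The main obstacle is the extraction of polynomial equations in the "if" direction. Two ingredients are indispensable: the holomorphic extension of the elementary symmetric functions across the branch locus, which relies on the Riemann removable singularity theorem together with local boundedness of the fiber-values; and the polynomial-growth Liouville theorem on $\bC^k$, which is what converts the algebraic-region estimate into actual polynomial degrees. Both rely on $\pi$ being a bona fide finite branched cover, so the properness argument (from the growth bound) and the purity of $V$ must already be in place before the construction can begin.
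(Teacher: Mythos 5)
The paper does not prove this statement at all -- it is quoted verbatim as Theorem 2 of Rudin's paper \cite{Ru}, so there is no internal proof to compare against. Your argument is, in substance, the classical proof of Rudin's criterion (essentially the one in \cite{Ru} and in Chirka's treatment of proper projections of analytic sets): properness of the projection along $V_2$ from the growth bound, finiteness of the fibers, the branched-covering structure over $\bC^k$, holomorphic extension of the elementary symmetric functions of the fiber values across the branch locus, and the polynomial-growth Liouville theorem to conclude that the canonical defining functions $Q_j$ are polynomials; the ``only if'' half via Noether normalization and the Cauchy root bound is also the standard one. Two small points are worth making explicit if you write this up: first, the sheet number $d$ is well defined and finite because the finiteness of a single fiber, together with local finiteness of the family of irreducible components of the closed analytic set $V$ and the fact that each pure $k$-dimensional component maps properly and finitely onto $\bC^k$ (a pure $k$-dimensional analytic subset of $\bC^k$ has interior points, hence equals $\bC^k$), forces $V$ to have only finitely many components; second, in the final step you should invoke the standard fact that an irreducible $k$-dimensional analytic subset of the pure $k$-dimensional algebraic set $\tilde V$ is an irreducible component of $\tilde V$, and that such components are themselves algebraic, so that $V$, being a finite union of them, is algebraic. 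With these remarks your proposal is correct and complete in outline.
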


The next result is motivated by Theorem 2.9 of \cite{DR} and it is useful for the proof of Theorem \ref{t:dim-Ptg}. We remember that by Proposition \ref{p:C5-alg} the cone $C_{5, \infty}(X)$ is an algebraic set. 

\begin{proposition}\label{p:V-CV-same}  Let $X\subset  \bC^m$ be a pure $k$-dimensional algebraic set.  Suppose $C_{5, \infty}(X)$ has a pure dimension $k$. Then there exists an algebraic region $\Omega$ of type $(k, m)$ such that $X$,  $C_{3, \infty}(X)$ and $C_{5,\infty}(X)$  lie in $\Omega$.   
\end{proposition}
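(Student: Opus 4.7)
The strategy is to exhibit one complex linear decomposition $\bC^m = V_1\oplus V_2$, with $\dim V_1=k$ and $\dim V_2=m-k$, for which each of $X$, $C_{3,\infty}(X)$ and $C_{5,\infty}(X)$ admits a polynomial estimate of the form \eqref{eq:alg.reg.}. Once such a decomposition is in hand, taking the maxima of the constants $A$ and $B$ produced for each of the three sets yields a single algebraic region $\Omega$ containing them all.

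The decomposition is forced by the hypothesis. By Proposition \ref{p:C5-alg}, $C_{5,\infty}(X)$ is algebraic, and since it is complexly homogeneous and of pure complex dimension $k$ by assumption, a transversality argument identical to the one used in the proof of Lemma \ref{p:exitrans} produces an $(m-k)$-dimensional complex subspace $V_2\subset\bC^m$ with $V_2\cap C_{5,\infty}(X)=\{0\}$. Let $V_1$ be any complementary $k$-dimensional subspace. Since $C_{3,\infty}(X)\subset C_{5,\infty}(X)$ by Proposition \ref{p:cones}, the same $V_2$ automatically satisfies $V_2\cap C_{3,\infty}(X)=\{0\}$.

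With this decomposition fixed, the estimates for the two cones come for free from compactness. For any closed complexly homogeneous set $C\subset\bC^m$ with $C\cap V_2=\{0\}$, the continuous function $z\mapsto \|z'\|/\|z\|$ is strictly positive on the compact set $C\cap\{\|z\|=1\}$, so it is bounded below by some $\varepsilon>0$; by complex homogeneity this yields the linear bound $\|z''\|\leq \varepsilon^{-1}\|z'\|$ on all of $C$. Applying this to $C=C_{5,\infty}(X)$ (and hence, a fortiori, to $C_{3,\infty}(X)$) handles the two cones with constants $A_1=\varepsilon^{-1}$ and $B_1=1$. For $X$ itself, I would appeal to a characterization in the spirit of Theorem 2.9 of \cite{DR}: the transversality $V_2\cap C_{3,\infty}(X)=\{0\}$ forces $X$ to lie in an algebraic region with this prescribed decomposition $V_1\oplus V_2$ and some constants $A_0,B_0$. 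Setting $A=\max\{A_0,\varepsilon^{-1}\}$ and $B=\max\{B_0,1\}$ then gives a common $\Omega$ that contains $X$, $C_{3,\infty}(X)$ and $C_{5,\infty}(X)$.

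The step I expect to be the most delicate is not the construction of $V_2$ nor the combination of constants, but rather the passage from the transversality $V_2\cap C_{3,\infty}(X)=\{0\}$ to a polynomial growth bound for $X$ itself. The cones control directions but not rates; upgrading a purely directional statement into an estimate \eqref{eq:alg.reg.} is where the algebraicity of $X$ is genuinely used, and where one must invoke Rudin's Theorem \ref{t:Ru} together with the characterization from \cite{DR}.
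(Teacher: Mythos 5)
Your overall architecture matches the paper's: use the hypothesis (algebraicity of $C_{5,\infty}(X)$ from Proposition \ref{p:C5-alg} plus pure dimension $k$) together with Rudin's Theorem \ref{t:Ru} and complex homogeneity to get a decomposition $\bC^m=V_1\oplus V_2$ with $V_2\cap C_{5,\infty}(X)=\{0\}$, bound the cones (which is easy), then bound $X$ in the \emph{same} decomposition and take maxima of constants. The problem is the step you yourself flag as delicate: the passage from $V_2\cap C_{3,\infty}(X)=\{0\}$ to a polynomial estimate for $X$ adapted to this \emph{prescribed} $V_1\oplus V_2$. You defer it to ``a characterization in the spirit of Theorem 2.9 of \cite{DR}'', but that theorem (as invoked in the paper, e.g.\ in Lemma \ref{p:exitrans}) only asserts the existence of \emph{some} algebraic region containing $X$ and $C_{3,\infty}(X)$; it does not say that every $V_2$ transverse to the cone at infinity can serve as the $V_2$ of such a region. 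Since producing the estimate for $X$ relative to the decomposition dictated by $C_{5,\infty}(X)$ is precisely the content of the proposition, leaving it to an unproved ``spirit of'' citation is a genuine gap, not a routine bookkeeping step.

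The gap is fillable, and in fact by an argument already present in the paper (it is the heart of the paper's own proof of this proposition, and reappears in the proof of Lemma \ref{p:piprop}): suppose no bound $\|w''\|\leq R(1+\|w'\|)^{B}$ (with $B\geq 1$) holds on $X$; then there is a sequence $(w_j)_j$ in $X$ with $\|w_j''\|>j(1+\|w_j'\|)^{B}$, hence $\|w_j''\|\to\infty$ and $\|w_j'\|/\|w_j''\|\to 0$, and scaling by $t_j=1/\|w_j''\|$ gives, after passing to a subsequence, a nonzero limit $y_0\in V_2\cap C_{3,\infty}(X)\subset V_2\cap C_{5,\infty}(X)$, contradicting the transversality you established. (This also shows the bound for $X$ holds already with exponent $B=1$, matching your compactness bound for the cones, so the final region can be taken with $B=1$.) Had you written out this sequence argument instead of appealing to \cite{DR}, your proof would coincide with the paper's; as written, the decisive estimate is asserted rather than proved.
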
 
 \begin{proof} We follow the proof of Theorem 2.9 of \cite{DR}. 
 
 It follows by Theorem \ref{t:Ru} that  there exist an algebraic region $\tilde \Omega$ of type $(k, m)$ and   vector spaces $V_1$ and $V_2$ in $ \bC^m$, with $\dim V_1=k$, $\dim V_2=m-k$, $ \bC^m= V_1 \oplus V_2$, and positive real numbers $\tilde A, B$ such that for any $z \in \tilde \Omega$ (hence, in particular, for $z\in  C_{5,\infty}(X)$) we have:  
 	\begin{equation}\label{eq:31}
 		\|z''\| < \tilde  A(1+\|z'\|)^B, 
 	\end{equation}  
 	where $z=z' +z''$, with $z'\in V_1$ and $z'' \in V_2$.  
  
We assume in \eqref{eq:31} that $B\geq 1$ since $(1+t)^s \leq (1+t)$, for any $t\geq 0$ and $0 \leq s\leq 1$. 
 	
We claim that there exists a  positive real number  $R$   such that,  for any $w\in X$, we have: 
\begin{equation}\label{eq:2}
 		\|w''\| < R(1+\|w'\|)^{B}, 
\end{equation}
where $w=w' +w''$, with $w'\in V_1$ and $w'' \in V_2$.     
 	
If the claim is not true, then  there exists a sequence   $(w_j)_{j}$ in $X$ such that $ \| w_j''\| > j (1 + \|w_j'\|)^{B} $ and, up to a subsequence, we may suppose that   $\lim_{j\to\infty}  \frac{ w_j''}{\|w_j''\|} = y_0  \in V_2$.  We have  $\lim_{j\to\infty}\|w_j''\| =\infty$. Since $ \bC^m$ is the direct sum of $V_1$ and $V_2$, $\lim_{j\to\infty} \|w_j\|= \infty $. Then 

\[\frac{1}{j} > \frac{(1 + \|w_j'\|)^{B}}{\| w_j''\|}  \geq \frac{(1 + \|w_j'\|)}{\| w_j''\|} \geq  \frac{\|w_j'\|}{\| w_j''\|} .\] 
 	
Let $t_j := (\tilde A)^{-1}\|w_j''\|$. We have  $\lim_{j\to\infty} t_j =\infty$ and 
 	
\[ \lim_{j\to\infty}\frac{1}{t_j} w_j = \lim_{j\to\infty}\frac{1}{t_j} \left( w_j' + w_j'' \right) = \tilde A y_0.\] 
 
It follows that  $\tilde A y_0\in C_{3, \infty}(X)$ by Definition  \ref{d:cones}. Now, since $V_2$ is a linear space and $ y_0 \in V_2$, we have $\tilde Ay_0 \in V_2 \cap C_{3, \infty}(X)$. By Proposition \ref{p:cones}, it follows that $\tilde Ay_0 \in V_2 \cap C_{5,\infty}(X)$.  This is a contradiction, because $\tilde Ay_0$ does not verify \eqref{eq:31}.  

Therefore, there exists a positive real number  $R$   such that, for any $w\in X$,  
\[ 	\|w''\| < R(1+\|w'\|)^{B}, \]
where $w=w' +w''$, with $w'\in V_1$ and $w'' \in V_2$.   Let $ A:= \max\{R, \tilde A\}$, then $X$ and $C_{5,\infty}(X)$  lie in an algebraic region $\Omega$ of type $(k,m)$, with vector spaces $V_1$ and $V_2$ in $ \bC^m$, with $\dim V_1=k$, $\dim V_2=m-k$  and positive real numbers $A$ and $B$. Since $C_{3,\infty}(X) \subset C_{5,\infty}(X)$, it follows that $C_{3,\infty}(X)$ lies in $\Omega$.  This finishes the proof. 
 \end{proof} 	

In the proof of Theorem \ref{t:dim-Ptg}, we use the definition of the degree of an algebraic set. We follow section 11.3 of \cite{Ch}. As before, the complex projective space is denoted by $\bP^m$ and $ \bC^m$ is identified with the open set $\{[x_0: x_1: \ldots :x_m] \in \bP^m \mid x_0\neq 0 \}$ of $\bP^m$ through the map $\tau: \bC^m \to \bP^m$, $\tau(x_1, \ldots, x_m)=[1:x_1:\ldots:x_m]$.   
 
\begin{definition}\label{d:degree} Let $X\subset  \bC^m$ be an algebraic set. The degree of $X$, denoted by $\deg X$,  is the degree of its closure in $\bP^m$.   
\end{definition}

The next one is the main result of this section:

\begin{theorem}\label{t:dim-Ptg} Let $X\subset  \bC^m$ be a pure $k$-dimensional algebraic set.  Suppose $C_{5, \infty}(X)$ has a pure dimension $k$. Then  $X$ is an affine space. 
\end{theorem}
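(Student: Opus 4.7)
The plan is to use Proposition \ref{p:V-CV-same} to embed $X$ in an algebraic region, reduce $X$ to a graph via a projection argument, and then force that graph to be affine by producing, in the contrary case, a forbidden element of $V_2 \cap C_{5,\infty}(X)$.

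First I would apply Proposition \ref{p:V-CV-same} to obtain a splitting $\bC^m = V_1 \oplus V_2$ with $\dim V_1 = k$ and constants $A, B > 0$ such that both $X$ and $C_{5,\infty}(X)$ satisfy $\|z''\| \leq A(1+\|z'\|)^B$, where $z = z' + z''$ with $z' \in V_1$ and $z'' \in V_2$. The key initial observation is that $C_{5,\infty}(X) \cap V_2 = \{0\}$: since $C_{5,\infty}(X)$ is a complex cone, a nonzero $v \in V_2 \cap C_{5,\infty}(X)$ would generate a whole complex line $\bC v \subseteq V_2 \cap C_{5,\infty}(X)$, and elements $\lambda v$ of arbitrarily large norm would then violate the bound $\|\lambda v\| \leq A$.

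Next, consider the restriction $\pi: X \to V_1$ of the projection along $V_2$. The algebraic-region estimate makes $\pi$ proper with fibers that are bounded algebraic subsets of $V_2$, hence finite; so $\pi$ is a finite surjective morphism of some degree $d$. I would then prove $d = 1$: if $d \geq 2$, I would take a sequence $(x_j)$ in $V_1$ off the branch locus of $\pi$ with $\|x_j\| \to \infty$, pick two distinct points $p_j \neq q_j$ in $\pi^{-1}(x_j)$, and note that $p_j - q_j$ is a nonzero vector in $V_2$. Rescaling by $t_j = 1/\|p_j - q_j\|$, a subsequential limit of $t_j(p_j - q_j)$ is a unit vector in $V_2 \cap C_{5,\infty}(X) = \{0\}$, a contradiction.

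Given $d = 1$, the number of irreducible components of $X$ is at most $d = 1$, so $X$ is irreducible, and $\pi$ is a finite birational morphism onto the normal variety $V_1$. By Zariski's Main Theorem, $\pi$ is an isomorphism, realizing $X$ as the graph of a polynomial map $f : V_1 \to V_2$. It then remains to show $\deg f \leq 1$. Suppose $D := \deg f \geq 2$ and let $f_D$ denote the degree-$D$ homogeneous component of $f$; choose $w, u \in V_1$ with $Df_D(w)(u) \neq 0$ (possible because $f_D \not\equiv 0$ is homogeneous of degree $\geq 2$). Taking $p_j = (jw+u, f(jw+u))$, $q_j = (jw, f(jw))$ and $t_j = j^{-(D-1)}$, a direct expansion in $1/j$ gives
\[ \lim_{j\to\infty} t_j (p_j - q_j) = \bigl(0,\, Df_D(w)(u)\bigr) \in V_2 \setminus \{0\}, \]
again contradicting $V_2 \cap C_{5,\infty}(X) = \{0\}$. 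Hence $f$ is affine and $X$ is an affine subspace. The main obstacle, in my view, is the passage from $d=1$ to the polynomial-graph structure; it rests on the normality of $V_1$ together with Zariski's Main Theorem rather than on the cone hypothesis, and may deserve a more elementary self-contained treatment.
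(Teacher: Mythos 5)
Your proposal is correct, and its first half is essentially the paper's argument: you invoke Proposition \ref{p:V-CV-same}, observe that the cone property of $C_{5,\infty}(X)$ together with the bound \eqref{eq:alg.reg.} forces $V_2\cap C_{5,\infty}(X)=\{0\}$, note that the projection along $V_2$ is proper with finite fibers, and rule out covering degree $\geq 2$ by rescaling the difference of two far-away points in a common fiber to produce a nonzero vector of $V_2\cap C_{5,\infty}(X)$ (the paper does the same with two liftings of a ray $tv$ avoiding the branch locus). Where you genuinely diverge is the endgame. The paper uses Corollary 1 of \cite[p.~126]{Ch} to identify the number of sheets of $\pi_\Omega\colon X\to V_1$ with the projective degree $\deg X$, so that degree one in the covering sense immediately gives $\deg X=1$, and then it concludes by citing Proposition 3.3 of \cite{FS}. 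You correctly recognize that covering degree one by itself only says $\pi$ is finite and birational (a polynomial graph of high degree also has covering degree one), so you upgrade it to an isomorphism via normality of $V_1$ and Zariski's Main Theorem, realize $X$ as the graph of a polynomial map $f\colon V_1\to V_2$, and then use the hypothesis a second time: the asymptotic expansion of $f(jw+u)-f(jw)$ with $t_j=j^{-(D-1)}$ produces $\bigl(0,Df_D(w)(u)\bigr)\in V_2\cap C_{5,\infty}(X)\setminus\{0\}$ unless $\deg f\leq 1$ (and your choice of $w,u$ is fine, e.g.\ by Euler's identity one may take $u=w$ with $f_D(w)\neq 0$). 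The trade-off: the paper's route is shorter but rests on two external results (Chirka's sheets-equal-degree statement and the rigidity result of \cite{FS}), while yours avoids projective degree altogether and in effect reproves the special case of \cite{FS} that is needed, at the cost of the ZMT/finite-birational-onto-normal step and a short computation; the small glosses you leave (surjectivity and finiteness of $\pi$, existence of unramified points arbitrarily far out, number of irreducible components at most the covering degree) are standard and unproblematic. If you prefer to sidestep ZMT, you could instead quote exactly what the paper does: the properness of $\pi_\Omega$ lets you apply \cite[p.~126, Corollary 1]{Ch} to get $\deg X=1$ and then \cite{FS}.
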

\begin{proof}  By Proposition \ref{p:V-CV-same}, there exists an algebraic region $\Omega$ of type $(k, m)$ such that $X$, $C_{3, \infty}(X)$ and $C_{5, \infty}(X)$ lie in $\Omega$. Then, there exist  vector spaces $V_1, V_2\subset  \bC^m$ and  constants $A, B$ as in Definition \ref{d:Ru}. We may choose linear coordinates $(x, y)$ in $ \bC^m$ such that $x\in V_1$ and $y\in V_2$. In this coordinate system, we denote by $\pi_{\Omega}$ the canonical projection from  $ \bC^m$ to $V_1$, i.e., $\pi_{\Omega}(x,y)=x$. Moreover, we may assume that $V_1$ and $V_2$ are orthogonal to each other and that $\pi_{\Omega}$ is an orthogonal projection; see Theorem 3 of page 78 of \cite{Ch}. 

We have the following direct fact concerning the projection $\pi_{\Omega}$: 
\begin{lemma}\label{l:1}  The restrictions  $\pi_{\Omega}: X \to V_1$ and $\pi_{\Omega}: C_{5, \infty}(X) \to V_1$ are  proper maps.   
\end{lemma}
\fin 

Then, it follows that the closures of $X$ and $V_2$ (respectively, the closures of $C_{5, \infty}(X)$ and $V_2$) in $\bP^m$ do not have points at infinity in common (here, the phrase ``at infinity'' means the set  $\bP^m \setminus  \bC^m$). Then, it follows by Corollary 1 of \cite[p. 126]{Ch} the following fact:  

\begin{lemma}\label{l:proj} The restriction $\pi_{\Omega}: X \to V_1$ $($respectively, $\pi_{\Omega}: C_{5, \infty} (X) \to V_1$$)$ is a ramified covering  with number of sheets equal to $\deg X$ $($respectively, $\deg C_{5, \infty}(X))$. 	 
\end{lemma}
\fin 

Since $C_{5, \infty}(X)$ is a cone, it follows by Lemma \ref{l:1} that:  
\begin{equation}\label{eq:kernel}
\ker \pi_{\Omega} \cap C_{5, \infty}(X)=\{0\}.
\end{equation}
	
We will show that $\deg X =1$.  Let $\Sigma\subset V_1$ be  the critical values of $\pi_{\Omega} : X \to V_1$ and let $v\notin C_{3,\infty}(\Sigma)$. It follows that $tv \notin \Sigma$, for $t>0$ big enough. 

If $\deg X >1$, then there are two different liftings $\lambda_{1}(t), \lambda_{2}(t)$ of the path $tv$ by $\pi_{\Omega} : X \to V_1$, with $t>0$ big enough. So $\pi_{\Omega}(\lambda_1(t))=\pi_{\Omega}(\lambda_2(t))=tv$. 

 It follows that 
\begin{equation}\label{eq:to0} 
		\left(\lambda_{1}(t) -\lambda_{2}(t)\right) \in \ker \pi_{\Omega}. 
	\end{equation}
 
 We may assume that there is a sequence $(t_j)_j$ in $\bR$ such that $\lim_{j\to\infty}t_j= \infty$ and  
\begin{equation}\label{eq:to1} 
	w:= 	\lim_{j\to \infty}	\frac{\lambda_{1}(t_j) -\lambda_{2}(t_j)}{\|\lambda_{1}(t_j) -\lambda_{2}(t_j)\|}. 
\end{equation}

It follows that $w \in \ker \pi_{\Omega}$. We have $\lim_{j\to\infty}\|\lambda_1(t_j)\|=\lim_{j\to\infty}\|\lambda_2(t_j)\|=
\infty$  since $\pi_{\Omega}$ is an orthogonal projection. Hence   
\[ w \in \ker \pi_{\Omega}   \cap C_{5, \infty}(X).\] 

But this  is a contradiction with \eqref{eq:kernel} since $w\neq 0$. 
	
Thus,  $\deg X=1$. By Proposition 3.3 of \cite{FS}, we have that $X$ is an affine space.  
\end{proof}
 
\section{Tangent cones and projections}\label{s:4}

Let $X\subset \bC^m$ be an unbounded pure $k$-dimensional algebraic set fixed. We begin with the following definition: 

\begin{definition} We say that an $m-k$ dimensional vector space $W\subset  \bC^m$ is transverse to $X$ provided that $W \cap C_{3,\infty}(X) = \{ 0 \}$.
\end{definition} 

The existence of such transverse vector spaces is shown in the next result.

\begin{lemma}\label{p:exitrans}  There exists an $m-k$ dimensional vector space  transverse to $X$.    
\end{lemma}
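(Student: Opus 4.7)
The plan is a Grassmannian genericity argument. By Proposition 2.10 of \cite{DR} (together with the facts recorded just before the statement), $C_{3,\infty}(X)$ is a closed complex cone that is an algebraic set of dimension exactly $k$. Because $X$ is unbounded one has $k\geq 1$, and the case $k=m$ is immediate with $W=\{0\}$, so I will assume $1\leq k<m$.

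The first step is to projectivize. The cone $C_{3,\infty}(X)$ descends to a closed projective subvariety $\bP(C_{3,\infty}(X))\subset \bP^{m-1}$ of dimension $k-1$, and the requirement $W\cap C_{3,\infty}(X)=\{0\}$ for $W$ of dimension $m-k$ is equivalent to the disjointness $\bP(W)\cap \bP(C_{3,\infty}(X))=\emptyset$ in $\bP^{m-1}$, where $\bP(W)$ is a projective linear subspace of dimension $m-k-1$. The expected dimension of this intersection is $(m-k-1)+(k-1)-(m-1)=-1$, so one expects a generic $(m-k-1)$-plane to miss $\bP(C_{3,\infty}(X))$ entirely.

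To make this rigorous, I would form the incidence variety
\[ I=\{(W,[v])\in \mathrm{Gr}(m-k,m)\times \bP(C_{3,\infty}(X)) : v\in W\} \]
and exploit its two projections. The fiber of $I\to \bP(C_{3,\infty}(X))$ over $[v]$ is the family of $(m-k)$-planes containing the line $\bC v$, which is a smooth subvariety of $\mathrm{Gr}(m-k,m)$ of dimension $(m-k-1)k$. Hence
\[ \dim I \leq (k-1)+(m-k-1)k = k(m-k)-1 < k(m-k) = \dim \mathrm{Gr}(m-k,m). \]
Therefore the image of $I$ under the other projection is a proper Zariski-closed subset of the Grassmannian, and any $W$ in its nonempty complement is transverse to $X$.

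The only delicate point is the dimension bookkeeping; the rest is formal. A fully elementary variant, which may be closer to the authors' style, is an induction on $\dim W$: start with $v_1\notin C_{3,\infty}(X)$; given a subspace $W_j$ of dimension $j<m-k$ with $W_j\cap C_{3,\infty}(X)=\{0\}$, observe that $C_{3,\infty}(X)+W_j$ is itself a complex cone of dimension at most $k+j<m$, hence its Zariski closure is a proper algebraic subset of $\bC^m$; any $v_{j+1}$ outside this closure extends $W_j$ to a subspace $W_{j+1}=W_j\oplus\bC v_{j+1}$ that is still transverse, using the cone property of $C_{3,\infty}(X)+W_j$ to rule out nonzero intersections.
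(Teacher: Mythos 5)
Your proof is correct, but it follows a genuinely different route from the paper. The paper's own argument invokes Theorem 2.9 of \cite{DR}: $X$ and $C_{3,\infty}(X)$ lie together in one algebraic region $\Omega$ of type $(k,m)$ with decomposition $\bC^m=V_1\oplus V_2$, and then the polynomial bound $\|z''\|<A(1+\|z'\|)^B$ combined with the complex homogeneity of $C_{3,\infty}(X)$ forces $V_2\cap C_{3,\infty}(X)=\{0\}$, so the specific subspace $V_2$ attached to the region is the transverse $W$. You instead use only that $C_{3,\infty}(X)$ is an algebraic cone of dimension $k$ (Corollary 3.1 of \cite{LP} and Proposition 2.10 of \cite{DR}, both quoted in the paper) and run a standard generic-position argument: projectivize, form the incidence variety $I\subset \mathrm{Gr}(m-k,m)\times\bP(C_{3,\infty}(X))$, and note $\dim I\le (k-1)+(m-k-1)k=k(m-k)-1<\dim\mathrm{Gr}(m-k,m)$, so the (closed, by properness over the compact factor) image of $I$ in the Grassmannian is proper and any $W$ outside it works; your inductive variant, picking $v_{j+1}$ off the closure of the cone $C_{3,\infty}(X)+W_j$ of dimension at most $k+j<m$, is also sound (the scaling used is really the cone property of $C_{3,\infty}(X)$ together with linearity of $W_j$, a harmless rephrasing). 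What each buys: your argument is self-contained modulo the algebraicity and dimension of $C_{3,\infty}(X)$ and yields the stronger statement that a \emph{generic} $(m-k)$-plane is transverse, while the paper's argument avoids any Grassmannian bookkeeping and produces a transverse $W$ directly tied to the algebraic region of Rudin's criterion, a structure the authors reuse elsewhere (e.g.\ in Proposition \ref{p:V-CV-same} and Theorem \ref{t:dim-Ptg}).
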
 
 \begin{proof} By Theorem 2.9 of \cite{DR}, there exists an algebraic region $\Omega$ of type $(k, m)$ such that $X$ and  $C_{3, \infty}(X)$ lie in $\Omega$. Then, there exist vector spaces $V_1$ and $V_2$ in $ \bC^m$, with $\dim V_1=k$, $\dim V_2=m-k$, $ \bC^m= V_1 \times V_2$, and positive real numbers $A, B$ such that for any $z \in  \Omega$ (hence, in particular, for $z\in  X \cup C_{3,\infty}(X)$) we have:  
 	\begin{equation}\label{eq:3}
 		\|z''\| <  A(1+\|z'\|)^B, 
 	\end{equation}  
 	where $z=z' +z''$, with $z'\in V_1$ and $z'' \in V_2$.  

Since $C_{3,\infty}(X)$  is  complexly homogeneous  in the sense that for any $v \in C_{3,\infty}(X)$ and $\lambda \in \bC$ the vector $\lambda v$  also belong to $C_{3,\infty}(X)$, it follows that $V_2 \cap C_{3,\infty}(X) =\{ 0\}$. So, $V_2$ is transverse to $X$. 
\end{proof} 

Let $W\subset  \bC^m$ be a fixed $m-k$ dimensional vector space transverse to $X$. This always exists by Lemma \ref{p:exitrans}. We may use the coordinates $(x, y) \in V\times W= \bC^m$, where $V$ and $W$ are orthogonal. 

Let  $\pi: V\times W \to V$ be the canonical projection, $\pi(x,y)=x$. Then: 

\begin{lemma}\label{p:piprop} The restriction $\pi: X \to V$ is a proper map. 
\end{lemma}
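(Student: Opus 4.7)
The plan is to prove properness by a sequential contradiction argument, using the hypothesis $W \cap C_{3,\infty}(X) = \{0\}$.

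To show $\pi: X \to V$ is proper it suffices to verify the following: for every sequence $(p_j)_j$ in $X$ with $\|p_j\|\to \infty$, we must have $\|\pi(p_j)\|\to \infty$. Equivalently, no sequence in $X$ can escape to infinity while its projection to $V$ stays bounded. Suppose for contradiction such a sequence exists. Write $p_j=(x_j,y_j)\in V\times W$. Then $(x_j)_j$ is bounded and, because $V$ and $W$ are orthogonal so that $\|p_j\|^2=\|x_j\|^2+\|y_j\|^2$, the unboundedness of $(p_j)_j$ forces $\|y_j\|\to \infty$.

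Next I would rescale to produce a nonzero vector in $W\cap C_{3,\infty}(X)$. Set $t_j:=1/\|y_j\|\in \bC$, so $t_j\to 0$ and $\|t_jp_j\|^2=t_j^2\|x_j\|^2+1$. Since $(x_j)_j$ is bounded we get $t_jx_j\to 0$, while $\|t_jy_j\|=1$, so after passing to a subsequence we may assume $t_jy_j\to y_0\in W$ with $\|y_0\|=1$. Therefore
\[
\lim_{j\to\infty} t_jp_j \;=\; \lim_{j\to\infty}(t_jx_j,t_jy_j) \;=\; (0,y_0),
\]
and identifying $(0,y_0)$ with $y_0\in W$ we conclude $y_0\in W$ is a nonzero point. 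By Definition \ref{d:cones}, $y_0\in C_{3,\infty}(X)$, contradicting the transversality assumption $W\cap C_{3,\infty}(X)=\{0\}$.

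The argument is essentially a single observation, and there is no real obstacle: everything rests on the transversality hypothesis, the orthogonality of the splitting $\bC^m=V\times W$ (which makes $\|\pi(p)\|\le \|p\|$ and ensures that bounded projection plus unbounded norm forces the $W$-component to blow up), and the definition of $C_{3,\infty}(X)$ as the set of limits of $t_jp_j$ with $\|p_j\|\to\infty$. The only minor point to state cleanly is that $C_{3,\infty}(X)$ is closed and complexly homogeneous, which is already noted right after Definition \ref{d:cones}, so that extraction of a convergent subsequence of $(t_jy_j)_j$ on the unit sphere of $W$ yields a genuine element of $C_{3,\infty}(X)\cap W$.
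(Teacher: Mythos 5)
Your proof is correct and takes essentially the same route as the paper: a contradiction argument that rescales a hypothetical bad sequence by $1/\|y_j\|$ and extracts a subsequence to produce a nonzero vector in $W\cap C_{3,\infty}(X)$, contradicting transversality. The paper merely packages this as an intermediate claim that $X$ lies in $\{\|y\|<C\|x\|\}\cup B_R$ before deducing properness, which is only a cosmetic difference.
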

\begin{proof} We claim that there exist positive constants $C$ and $R$ such that \[X \subset  \{ (x,y) \in  \bC^m = V\times W \mid \|y\| < C \|x\|\} \cup B_{R}.\]  

Indeed, if the claim is not true, there exists a sequence $\{(x_j, y_j )\}_j$ in $X$  such that $\lim_{j\to\infty} \|(x_j, y_j)\|= \infty$ and $\|y_j\| \geq j \| x_j\|$, for any $j$. So $\lim_{j\to\infty} \frac{x_j}{\|y_j\|} = 0$. Up to a subsequence, we may assume that $\lim_{j\to\infty} \frac{y_j}{\|y_j\|} =y_0$. Then, $y_0 \in W$ and 
\[ \lim_{j\to\infty} \frac{1}{\|y_j\|}(x_j, y_j) =(0, y_0).\]
By Definition \ref{d:cones}, it follows that $(0,y_0) \in C_{3, \infty}(X)$. Hence $(0,y_0) \in C_{3,\infty}(X) \cap W$. This is a contradiction, because $W$ is transverse to $X$. Therefore, the claim holds. 
 
Now, it is a direct consequence of the claim that $\pi$ restricted to $X$ is a proper map.
\end{proof}

It follows by Lemmma \ref{p:piprop} that the closures of $X$ and $W$ in $\bP^m$ do not have points at infinity in common (here, the phrase ``at infinity'' means the set  $\bP^m\setminus  \bC^m$). Then, it follows by Corollary 1 of \cite[p. 126]{Ch} the following fact:  

\begin{lemma}\label{l:proj1} The restriction $\pi: X \to V$  is a ramified covering  with number of sheets equal to $\deg X$. \fin 
\end{lemma}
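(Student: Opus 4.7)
The plan is to reduce the statement to a standard projective fact and invoke the cited result of Chirka. The essential ingredient, properness of $\pi|_X$, is already provided by Lemma \ref{p:piprop}; the task is to translate it into an intersection statement in $\bP^m$ and apply the branched covering theorem.

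First, I would pass to projective closures. Identify $\bC^m$ with the affine chart $\{x_0\neq 0\}\subset \bP^m$ via $\tau$. Let $\bar X$ and $\bar W$ denote the closures of $X$ and $W$ in $\bP^m$. The projection $\pi:V\times W\to V$ extends to a rational (in fact linear) projection $\bar\pi:\bP^m\dashrightarrow \bar V$ whose indeterminacy locus is exactly $\bar W\cap H_\infty$. I would then argue that properness of $\pi|_X$, as established in Lemma \ref{p:piprop}, is equivalent to the disjointness
\[ \bar X \cap \bar W \cap H_\infty = \emptyset. \]
Indeed, if a point of $\bar X\cap\bar W$ lay on $H_\infty$, one could produce a sequence $(x_j,y_j)\in X$ with $\|(x_j,y_j)\|\to\infty$ while $\|x_j\|=\|\pi(x_j,y_j)\|$ stays bounded, contradicting properness; conversely properness rules out precisely this behaviour.

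Second, I would apply Corollary 1 of \cite[p.~126]{Ch}. That result states that if $\bar X\subset\bP^m$ is a pure $k$-dimensional projective algebraic set and $L\subset\bP^m$ is a linear subspace of codimension $k+1$ disjoint from $\bar X$, then the linear projection from $L$ realizes $\bar X$ as a ramified covering of $\bP^k$ whose number of sheets equals $\deg \bar X=\deg X$. Taking $L=\bar W\cap H_\infty$ (a linear subspace of $H_\infty\cong\bP^{m-1}$ of the right codimension, disjoint from $\bar X$ by the previous step) the projection from $L$ is exactly the projectivization of $\pi$, and it restricts on the affine chart to $\pi:X\to V$. Restricting the statement to the affine part gives that $\pi:X\to V$ is a ramified covering whose number of sheets agrees with $\deg X$.

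The only delicate point I anticipate is the bookkeeping between the affine projection $\pi$ and the linear projection from the centre $\bar W\cap H_\infty$ in $\bP^m$, together with verifying that ``at infinity'' disjointness in the statement preceding the lemma is really the hypothesis required by Chirka's corollary. Once that dictionary is set up, both the ``ramified covering'' conclusion and the count of sheets by $\deg X$ are immediate from the cited corollary.
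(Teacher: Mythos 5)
Your proposal is correct and follows essentially the same route as the paper: Lemma \ref{p:piprop} (properness) is translated into the statement that the closures of $X$ and $W$ in $\bP^m$ have no common points at infinity, and then Corollary 1 of \cite[p.~126]{Ch} is invoked to conclude that $\pi: X \to V$ is a ramified covering with $\deg X$ sheets. Your extra bookkeeping identifying the centre of projection with $\bar W\cap H_\infty$ is just a more explicit version of the dictionary the paper leaves implicit.
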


Now we are ready to give the 
\begin{proof}[Proof of Theorem \ref{t:St1}] The inclusion  $\Sing X \subset \Sing \, \pi$ is always true.

Now, assume that  there is no a positive constant $R$  such that $ \Sing \, \pi \setminus \bar B_R \subset  \Sing X \setminus \bar B_R$. Then, there  exists a sequence $(p_j)_j$ in $\Sing \, \pi  \setminus \Sing X$ such that $\lim_{j\to\infty}\|p_j\| = \infty$.  

Then, we can take $w_j \in T_{p_j} X$, $\|w_j\|= 1$, such that $d_{p_j}\pi(w_j)=0$. Since $\pi$ is a linear map, we have $d_{p_j}\pi(w_j)=\pi(w_j)$.  Therefore, $w_j \in W = \ker \pi$, for any $j$. 

Up to a subsequence, we may assume that $\lim_{j\to\infty}w_j = w_0$.  Then $w_0\in W \cap C_{4, \infty }(X)$ which is a contradiction.  
\end{proof}

We have fixed an $m-k$ plane $W$ transverse to $X$ and coordinates $(x,y) \in V\times W= \bC^m$, where $V$ and $W$ are orthogonal. We denote $y=(y_1, \ldots, y_{m-k})$ and we set $W^i:=\{(0,y) \in  \bC^m \mid y_i = 0 \},$ for $i=1, \ldots, m-k$. We denote $\pi_i:  \bC^m=V\times W \to \bC^k \times \bC=\bC^{k+1}$ the projection defined by $\pi_i(x,y)=(x, y_i)$. So we have $\ker \pi_i = W^i$.  

\begin{proof}[Proof of Theorem \ref{t:St2}] Since $C_{5,\infty}(X) \cap W^i = \{0\}$, it follows by Proposition \ref{p:cones} that   $C_{3,\infty}(X) \cap W^i = C_{4,\infty}(X) \cap W^i = \{ 0\}$. 

The restriction of $\pi_i$ to $X$ is a proper map since $C_{3,\infty}(X) \cap W^i =\{ 0\}$. In particular, $\pi_i :X \to \bC^{k+1}$ is a closed map. It follows that  $\pi_i(X) \subset \bC^{k+1}$ is a closed constructible set and, therefore, $\pi_i(X)$ is an algebraic set of dimension $k$. It follows that $\pi_i(X)$ is a hypersurface in $\bC^{k+1}$. 

{\bf Claim 1}: for sufficiently large $R$, the restriction $\pi_i: X \setminus \bar B_R \to \pi(X \setminus \bar B_R)$ is an injective map. 

Indeed, if the claim does not hold, then there exist sequences $(p_j)_j$ and $(q_j)_j$ in $X$ such that $p_j\neq q_j$, $\lim_{j\to\infty}\|p_j\|=\lim_{j\to\infty}\|q_j\|=\infty$ and $\pi_i
(p_j)=\pi_i(q_j)$.  In particular, $p_j - q_j \in W^i$. Up to a subsequence, we may assume that $\lim_{j\to\infty}\frac{p_j -q_j}{\|p_j -q_j\|}= w_0$.   

It follows that $w_0 \neq 0$. By Definition \ref{d:cones}, $w_0 \in C_{5, \infty }(X)$. Moreover, $w_0 \in W^i$ since  $p_j- q_j \in W^i$ for any $j$.   

Then, $w_0\in W^i \cap C_{5, \infty }(X)$, which is a contradiction. Therefore {\bf Claim 1} holds. 

{\bf Claim 2}: for sufficiently large $R$, the restriction $\pi_i: X \setminus \bar B_R \to \pi(X \setminus \bar B_R)$ is a proper map. 

Suppose that for sufficiently large $n\in \bN$, we have that $\pi_i: X \setminus \bar B_n \to \pi(X \setminus \bar B_n)$ is not proper. Then, since $\pi_i: X \to \bC^{k+1}$ is proper, there exists  a sequence $(p_{jn})_{j}$ in $X\setminus \bar B_n$ such that $\lim_{j\to \infty} \pi_i(p_{jn}) = y_n \in  \pi(X \setminus \bar B_n)$ and $\lim_{j\to \infty} p_{jn} = p_{n}$,  where $p_n \in X$ and $\|p_n\|=n$. Thus, there exists $x_n \in X\setminus \bar B_n$  such that $\pi_i(x_n)=y_n$. In particular, $x_n \neq p_n$. 

Then, up to a subsequence, we may assume that $\lim_{n\to \infty}  \frac{p_n -x_n}{\|p_n -x_n\|}= w$.   It follows that $w \neq 0$. Since $p_n- x_n \in W^i$ for any $n$, we have $w \in W^i$.

By construction,   $\lim_{n\to \infty} \|p_n\|=\lim_{n\to\infty} \|x_n\|=\infty$. Hence, $w \in C_{5, \infty }(X).$
  
Then  $w\in W^i \cap C_{5, \infty }(X)$, which is a contradiction. Therefore {\bf Claim 2} holds. 

It follows directly from {\bf Claim 1} and {\bf Claim 2} that $\pi_i: X \setminus \bar B_R \to \pi(X \setminus \bar B_R)$ is a homeomorphism for sufficiently large $R$. 
\end{proof}

\section*{Acknowledgments} 

L. R. G. Dias was partially supported by  Fapemig-Brazil Grant APQ-02085-21 and by Grants  301631/2022-0 and 403959/2023-3 of the Conselho Nacional de Desenvolvimento Cient\'ifico e Tecnol\'ogico (CNPq) of the Ministry of Science, Technology and Innovation of Brazil.

\end{document}